\newtheorem{definition}{Definition}
\newtheorem{proposition}{Proposition}
\newtheorem{theorem}{Theorem}
\newtheorem{lemma}{Lemma}
\newcommand\tr{\mathrm{tr}\,}
\newcommand\goth{\mathfrak}
\newcommand\R{\mathbb R}
\newcommand{\so}{\mathfrak{so}}
\newcommand{\OO}{\mathrm{O}}
\newcommand{\SO}{\mathrm{SO}}
\newcommand{\gl}{\mathfrak{gl}}
\newcommand{\GL}{\mathrm{GL}}
\begin{document}

\title{On one class of holonomy groups\,\\in pseudo-Riemannian geometry}
\author{Alexey~Bolsinov and Dragomir~Tsonev}

\maketitle

\begin{abstract}
\tiny {We describe a new class of holonomy groups on pseudo-Riemannian manifolds. Namely, we prove the following theorem. Let $g$ be a nondegenerate bilinear form of arbitrary signature on a vector space $V$ and  $L:V \to  V$ a $g$-symmetric operator.  Then the identity component of the centraliser of $L$ in the pseudo-orthogonal group $\OO(g)$ is a holonomy group for a suitable Levi-Civita connection.}

\end{abstract}

\section{Introduction and main result}

Holonomy groups were introduced by \'Elie Cartan in the twenties  \cite{Ca1, Ca2} for the study of Riemannian symmetric spaces  and since then the classification of holonomy groups has remained one of the classical problems in differential geometry.

\begin{definition}
Let $M$ be a smooth manifold endowed with an affine symmetric connection $\nabla$.  Then the  holonomy group of $\nabla$ is defined as the subgroup
$\mathrm{Hol}(\nabla) \subset \GL(T_x M)$ that consists of the linear operators $A:T_xM\to T_xM$ being  parallel transport transformations
along closed loops $\gamma$ with $\gamma(0)=\gamma(1)=x$.
\end{definition}

\noindent {\bf Problem.} Can a given subgroup  $H\subset \GL(n,\R)$ be realised as the holonomy group for an appropriate symmetric connection on $M^n$?

\medskip

The fundamental results in this direction are due to Marcel Berger \cite{Ber} who initiated the programme of classification of Riemannian  and irreducible holonomy groups which was  completed by  
D.~V.~Alekseevskii \cite{Alexeevski}, 
R.~Bryant \cite{Br1, Br2},  D.~Joyce \cite{Joy1, Joy2, Joy3},   S.~Merkulov, L.~Schwachh\"ofer \cite{MerkSchw} and S.\,T.~Yau \cite{Yau}.  Very good historical surveys can be found in \cite{Br4, Sch2}.

The classification of Lorentzian holonomy groups has recently been obtained by T.~Leistner \cite{Le1} and A.~Galaev \cite{Gal2}. However, in the general pseudo-Riemanian case,  the complete  description of holonomy groups is a very difficult problem which still remains open,  and even particular examples  are of interest (see \cite{Bergery,  Gal1, Gal3, Ike}). We refer to \cite{GalaevLeistner}  for more information on recent development in this field.

In our paper, we deal with Levi-Civita connections only. In algebraic terms this means that we consider only subgroups of the (pseudo) orthogonal group $\OO(g)$:
$$
H \subset  \OO(g)=\{ A\in \GL(V)~|~  g(Au,Av)=g(u,v), \  u,v\in V\},
$$
where $g$ is a non-degenerate bilinear form on $V$. 

The main result of the present paper is the following theorem. 

\begin{theorem}
\label{main}
For every $g$-symmetric operator $L:V\to V$, the identity connected component $G^0_L$ of  its centraliser   in $\OO(g)$
$$
G_L=\{ X\in \OO(g)~|~ XL=LX\}
$$
is a holonomy group for a certain (pseudo)-Riemannian metric $g$.
\end{theorem}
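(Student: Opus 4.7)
The plan is to realise $G_L$ as the holonomy of an explicit metric $g$ defined on a neighbourhood of $0 \in V$ together with a parallel $g$-symmetric $(1,1)$-tensor field whose value at $0$ equals the given $L$. Such a parallel extension exists if and only if parallel transport commutes with $L$, which immediately gives the inclusion $\mathrm{Hol}(\nabla)^\circ \subset G_L$. The entire content of the theorem is therefore the opposite inclusion: arranging the curvature of $g$ so that, by the Ambrose--Singer theorem, it generates the \emph{full} centraliser algebra $\mathfrak{g}_L$ rather than a proper subalgebra.

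My first step would be to invoke the canonical ``Jordan--type'' decomposition of $g$-symmetric operators into indecomposable $g$-blocks (real Jordan blocks paired with the anti-diagonal bilinear form, and complex-conjugate pairs glued via the standard hyperbolic form on the doubled space). Because $G_L$ factors as the product of block centralisers, and because the holonomy of a pseudo-Riemannian product is the product of the factors' holonomies, the problem reduces to a single indecomposable block. For each such block I would write a local metric in coordinates adapted to $L$, in which $L$ has its canonical constant matrix form and $g$ appears as a low-degree polynomial perturbation of a ``flat'' bilinear form compatible with $L$. Since $L$ is constant in these coordinates, the condition $\nabla L = 0$ becomes a set of linear relations on the Christoffel symbols, which translate into explicit linear constraints on the coefficients of $g$; crucially, these constraints leave many free parameters.

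The main obstacle is the lower bound on the holonomy. A naive metric satisfying $\nabla L = 0$ typically carries extra hidden symmetries and gives a holonomy strictly contained in $G_L^\circ$. My plan is to exploit the remaining freedom in the polynomial coefficients of $g$ in two possible ways: either (i) via a representation-theoretic argument, showing that the $G_L$-module of algebraic curvature tensors compatible with $L$ contains enough $\mathfrak{g}_L$-components that a generic admissible $g$ produces curvature operators spanning $\mathfrak{g}_L$; or (ii) by a direct block-by-block computation of $R(\partial_i,\partial_j)$ at the origin, exhibiting explicit curvature operators lying in $\mathfrak{g}_L$ and verifying that their parallel translates (together with Lie brackets, as permitted by Ambrose--Singer) generate $\mathfrak{g}_L$. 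Once each indecomposable block is handled, the product construction reassembles the general case and completes the proof.
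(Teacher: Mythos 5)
Your overall logic (a parallel $L$ forces $\mathrm{Hol}(\nabla)^\circ \subset G_L$, and curvature plus Ambrose--Singer must supply the reverse inclusion) matches the paper's, but your reduction step contains a fatal error: $G_L$ does \emph{not} factor as a product of the centralisers of the individual indecomposable $g$-Jordan blocks, and the problem does not reduce to a single such block. The only legitimate splitting is by distinct eigenvalues (the generalised eigenspaces are $g$-orthogonal, and $\mathfrak{g}_L$ splits accordingly); within a fixed eigenvalue, say $L$ nilpotent with Jordan blocks $L_1,\dots,L_k$, the algebra $\mathfrak{g}_L \subset \so(g)$ consists entirely of matrices whose diagonal blocks \emph{vanish} and whose off-diagonal blocks couple \emph{distinct} Jordan blocks. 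In particular, a single Jordan block is a regular operator: its commutant in $\gl(V)$ is spanned by the powers of $L$, which are all $g$-symmetric, so for one block $\mathfrak{g}_L = 0$ and $G_L^\circ$ is trivial. Consequently, any metric assembled as a pseudo-Riemannian product of single-block pieces has holonomy contained in $\prod_i G_{L_i}^\circ$, which for nilpotent $L$ with $k\ge 2$ blocks is the \emph{trivial} group --- a proper subgroup of $G_L^\circ$. Your construction can therefore never reach the full centraliser: the curvature operators you need must mix different Jordan blocks, and no product metric produces them.

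This cross-block interaction is precisely where the paper's real work lies and what your proposal is missing. The paper reduces only to the nilpotent (single-eigenvalue) case and then works with \emph{pairs} of Jordan blocks: it takes the explicit formal curvature tensor
$$
R(X) = \frac{d}{dt}\Big|_{t=0}\, p_{\mathrm{min}}(L+tX),
$$
which satisfies the Bianchi identity automatically (it has the form $\sum_k C_k X D_k$ with $C_k, D_k$ $g$-symmetric) and has image inside $\mathfrak{g}_L$; a direct computation shows that for two Jordan blocks its image is \emph{all} of $\mathfrak{g}_L$, and in general one sums the pairwise operators $R_{\mathrm{formal}} = \sum_{i<j}\widehat{R}_{ij}$, whose images $\mathfrak{m}_{ij}$ together span $\mathfrak{g}_L$. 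The geometric realisation is then a single quadratic metric $g(x) = g^0 + \mathcal{B}(x,x)$ on one coordinate chart --- not a product --- with $\mathcal{B}$ obtained from $R_{\mathrm{formal}}$ by formally substituting $\otimes$ for $X$; one checks that $L$ (kept constant) is $g$-symmetric, that $\nabla L = 0$, and that the curvature at the origin equals $R_{\mathrm{formal}}$, so that Ambrose--Singer gives $\mathfrak{g}_L \subset \mathfrak{hol}(\nabla)$. Your fallback options (i) and (ii) are statements of intent rather than arguments, and option (ii) --- a block-by-block curvature computation --- only becomes meaningful after you abandon the single-block reduction and compute the cross-block components; as formulated, your plan has nothing left to compute on each factor.
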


Notice that in the Riemannian case this theorem becomes trivial:  $L$ is diagonalisable and the connected component  $G^0_L$ of its centraliser  is isomorphic to the standard direct product $\SO(k_1)\oplus \dots \oplus \SO(k_m) \subset  \SO(n)$, $\sum k_i\le n$,  which is, of course,  a holonomy group.  In the pseudo-Riemannian case,  $L$ may have non-trivial Jordan blocks and the structure of $G^0_L$ becomes more complicated.

The structure of the paper is as follows.  First  we recall in Section~\ref{basics} the classical approach by Berger to studying holonomy groups. Like many other authors, we are going to use this approach in our paper. However,  in our opinion, the most interesting part of the present work consists in two explicit matrix formulas \eqref{r} and \eqref{r2}  that, in essence,  almost immediately lead to the solution. To the best of our knowledge, this kind of formulas did not appear in the context of holonomy groups before, and we would really appreciate any comments on this matter.  They came to ``holonomy groups''  from ``integrable systems on Lie algebras'' via ``projectively equivalent metrics''  and we explain this passage in Section 3.  The proof itself is given in Sections 4 (algebraic reduction), 5 (Berger test) and 6 (geometric realisation).   The last section (locnf, 
) contains some details related to the special case when $L$ has complex conjugate eigenvalues.

{\it Acknowledgements}.  We would like to thank D.~Alekseevskii, V.~Cort\'es, E.~Ferapontov, V.~Matveev and  T.~Leistner  for useful discussions. We  are also very grateful to the referee for the valuable remarks that helped us to improve substantially the structure of the paper.



\section{Some basic facts about holonomy groups:   \\ Ambrose-Singer theorem and Berger test}
\label{basics}

Let $\gamma$ be a curve connecting two points $x,y\in M$ (we think of $x$ as a fixed reference point while $y$ is variable) and $P_\gamma: T_xM \to T_y M$ denotes the parallel transport transformation.  The holonomy groups $\mathrm{Hol}_x\,(\nabla)$ and 
$\mathrm{Hol}_y\,(\nabla)$ related to these points are obviously conjugate by means of $P_\gamma$, i.e.,
$$
\mathrm{Hol}_y\,(\nabla) = P_\gamma \circ \mathrm{Hol}_x\,(\nabla) \circ P_\gamma^{-1}.
$$
In particular,  if $M$ is  connected, then the holonomy groups at different points are isomorphic.  

Notice that  $P_\gamma$ allows us to ``transfer''  from $x$ to $y$ (or back from $y$ to $x$) not only tangent vectors, but also tensors of any type.  For example, if $R : \Lambda^2 (T_yM) \to \goth{gl}(T_y M)$ is the curvature tensor of $\nabla$ at the point $y$,   then at then point $x$ we can define the transported tensor $R_\gamma: \Lambda^2 (T_xM) \to \goth{gl}(T_x M)$ as
$$
R_\gamma (u\wedge v)  = P_\gamma^{-1} \circ R\bigl(P_\gamma (u)\wedge P_\gamma(v)\bigr) \circ P_\gamma, \qquad u,v\in T_x M.
$$

The famous  Ambrose-Singer theorem \cite{Ambrose} gives the following description of the Lie algebra $\goth{hol}\,(\nabla)$ of the holonomy group $\mathrm{Hol}\,(\nabla)=\mathrm{Hol}_x\,(\nabla)$ in terms of the curvature tensor $R$:
\medskip

\noindent {\it  $\goth{hol}\,(\nabla)$  is generated (as a vector space) by the operators of the form $R_\gamma (u\wedge v)$.}

\medskip

This motivates the following construction.

\begin{definition}  A map $R: \Lambda^2V \to \gl(V)$ is called a  formal curvature tensor if it satisfies the Bianchi identity
\begin{equation}
R(u\wedge v)w + R(v\wedge w)u + R(w \wedge u)v = 0 \qquad \mbox{for all} \ u,v,w\in V.
\end{equation}
\end{definition}

This definition simply means that $R$ as a tensor of type $(1,3)$  satisfies all usual algebraic properties of  curvature tensors:
$$
R_{k\, ij}^m=R_{k\, ji}^m \quad \mbox{and}  \quad R_{k\, ij}^m+R_{i\, jk}^m+R_{j \, ki}^m=0.
$$

\begin{definition}\label{defBerger}  Let $\goth h\subset \gl(V)$ be a Lie subalgebra.  Consider the set of all formal curvature tensors $R: \Lambda^2V \to \gl(V)$ such that $\mathrm{Im}\, R\subset \goth h$:
$$
\mathcal R(\goth h)=\{ R: \Lambda^2V \to \goth h ~|~
R(u\wedge v)w + R(v\wedge w)u + R(w \wedge u)v = 0, \   u,v,w\in V\}.
$$
We say that $\goth h$ is a Berger algebra if it is generated as a vector space by the images of the formal curvature tensors $R\in \mathcal R(\goth h)$, i.e.,
$$
\goth h = \mathrm{span} \{ R(u\wedge v)~|~ R\in \mathcal R(\goth h), \ u,v\in V\}.
$$
\end{definition}

{\it Berger's test}  (sometimes referred to as Berger's criterion) is the following result which can, in fact,  be viewed as a version of the Ambrose--Singer theorem:

\medskip
{\it Let $\nabla$ be a  symmetric affine connection on $TM$.  Then the Lie algebra $\goth{hol} \, (\nabla)$ of its holonomy group $\mathrm{Hol} \, (\nabla)$ is {\it Berger}.}
\medskip

Usually the solution of the classification problem for holonomy groups consists of two parts. At first, one tries to describe all Lie subalgebras $\goth h\subset \gl(n,\R)$ of a certain type satisfying Berger's test  (i.e., Berger algebras). This part is purely algebraic. The second (geometric) part is to find
a suitable connection $\nabla$ for a given Berger algebra $\goth h$ which realises  $\goth h$ as the holonomy Lie algebra, i.e., $\goth h=\goth{hol}\, (\nabla)$.

 We follow the same scheme but use, in addition,  some ideas from two other areas of mathematics:  projectively equivalent metrics and  integrable systems on Lie algebras. These ideas are explained in  the next section.
The reader who is interested only in the proof itself may  proceed directly to  Sections 4, 5 and 6 which are formally independent of this preliminary discussion.


\section{Projectively equivalent metrics and sectional operators}\label{sectional}

The problem we are dealing with is closely related to the theory of projectively equivalent (pseudo)-Riemannian metrics  \cite{Aminova1,  Aminova2,  LC,  Matveev,   Sinjukov,  Mikes}.

\begin{definition} Two metrics $g$ and $\bar g$ on a manifold $M$ are called projectively equivalent,  if they have the same geodesics considered as unparametrised curves.
\end{definition}

In the Riemannian case the local classification of projectively equivalent pairs $g$ and $\bar g$ was obtained by Levi-Civita in 1896 \cite{LC}.  For  pseudo-Riemannian metrics, a complete description in reasonable terms of all possible projectively equivalent pairs is still an open problem  although it has been intensively studied (see \cite{Aminova1, Aminovabook, splitting, locnf, KM1, KM2, Solodovnikov}) and many particular examples and results in this direction have been obtained.

As a particular case of projectively equivalent metrics $g$ and $\bar g$ one can distinguish the following, which is closely related to our problem. Assume that $g$ admits a covariantly constant $(1,1)$-tensor field $L$. Then we can introduce a new metric $\bar g$ by setting
$$
\bar g(\xi, \eta) = g(L\xi, \eta).  
$$
Now, the metrics $g$ and $\bar g$ are not only projectively equivalent, but their geodesics coincide as parametrised curves.  In this case $g$ and $\bar g$ are called {\it affinely equivalent}.

In the pseudo-Riemannian case, the classification of pairs $(g, L)$ such that $\nabla L=0$   is an interesting  problem,  which has been solved only partially (see, for example, \cite{Solodovnikov}).

The condition $\nabla L=0$ can be interpreted in terms of the holonomy group  $\mathrm{Hol}(\nabla)$. Namely,
$g$ admits a covariantly constant $(1,1)$-tensor field if and only if
$\mathrm{Hol}(\nabla)$  is  a subgroup of the centraliser of $L$ in $\OO(g)$:
$$
\mathrm{Hol}(\nabla)\subset G_L = \{ X\in \OO(g)~|~  XLX^{-1}=L\}.
$$
Here by $L$ we understand the value of the desired $(1,1)$-tensor field at a fixed point $x_0\in M$.  The field  $L(x)$ itself can be constructed from the initial condition $L=L(x_0)$ by a parallel transport.  The independence of the choice of  a path $\gamma$ between $x_0$ and $x$ is automatically guaranteed by the inclusion $\mathrm{Hol}(\nabla)\subset G_L$.

In the spirit of the present discussion it is natural to conjecture that for a generic metric $g$ satisfying $\nabla L=0$, its holonomy group coincides with $G^0_L$, which is basically the statement of our main theorem.  

Since we are going to use Berger's approach, the role of the curvature tensor will be very important.  Our proof will be based on one unexpected and remarkable relationship between the algebraic structure of the curvature tensor of projectively equivalent metrics and integrable Hamiltonian systems on Lie algebras.

To explain this relationship, 
we first notice that  $\Lambda^2 V$  can be naturally identified with $\so(g)$.  Therefore, in the (pseudo)-Riemannian case,  a curvature tensor at a fixed point can be understood as a linear map
$$
R: \so(g) \to \so(g).
$$

Some operators of this kind play  important role in the theory  of integrable systems on semisimple Lie algebras.
 
\begin{definition}   We say that a linear map
$$
R:  \so(n) \to \so(n)
$$
is  a  sectional operator, if $R$ is self-adjoint w.r.t. the Killing form and satisfies the algebraic identity:
\begin{equation}
[R(X), L] = [X, M] \quad \mbox{for all } X\in  \so(n),
\label{sect1}
\end{equation}
where $L$ and $M$ are some fixed symmetric matrices.
\end{definition}

These operators first appeared in the famous paper by S.~Manakov \cite{Manakov} on the integrability of a multidimensional rigid body and then were studied by A.~Mischenko and A.~Fomenko in the framework of the argument shift method  \cite{MischFom}. 
The terminology ``sectional'' was suggested by A.~Fomenko and V.~Trofimov
 \cite{TrofFom} for a more general class of operators on Lie algebras with similar properties and originally was in no way related to ``sectional curvature''.  However, such a relation exists and is, in fact, very close.

The following observation, which is, in fact,  an algebraic interpretation of the so-called second Sinjukov equation \cite{Sinjukov} for projectively equivalent metrics,  was made in \cite{BolsMatvKios}.
 
\begin{theorem}
\label{BMK}
If $g$ and $\bar g$ are projectively equivalent, then the curvature tensor of $g$ considered as a linear map
$R:  \so(g) \to \so(g)$
is a sectional operator, i.e., satisfies identity \eqref{sect1}
with $L$ defined by $\bar g^{-1} g = \det L \cdot L$
and $M$ being the Hessian of $2\tr L$, i.e. $M^i_j=2\nabla^i\nabla_j \tr L$.
\end{theorem}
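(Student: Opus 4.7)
The plan is to combine two classical ingredients: the Sinjukov reformulation of projective equivalence and the Ricci commutation identity. By a well-known theorem (Sinjukov, essentially the Levi-Civita integrability condition rewritten in invariant form), projective equivalence of $g$ and $\bar g$ is equivalent to the assertion that the $g$-symmetric $(1,1)$-tensor $L$ defined through $\bar g^{-1}g=\det L\cdot L$ satisfies the linear covariant PDE
\begin{equation*}
\nabla_k L_{ij}=g_{ki}\lambda_j+g_{kj}\lambda_i,\qquad \lambda_i=\tfrac12\partial_i\tr L.
\end{equation*}
Here $L_{ij}=g_{is}L^{s}{}_j$; this form will be the starting point of the proof.

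Next I would differentiate this equation once more and antisymmetrize the two covariant indices. Applying the Ricci identity to the $(0,2)$-tensor $L$ on the left gives
\begin{equation*}
(\nabla_l\nabla_k-\nabla_k\nabla_l)L_{ij}=-R^{s}{}_{ilk}L_{sj}-R^{s}{}_{jlk}L_{is},
\end{equation*}
which, after raising one index and using the identification $\Lambda^2 V\cong\so(g)$, is exactly the commutator $[R(e_l\wedge e_k),L]$ of two endomorphisms of $V$. On the right one obtains
\begin{equation*}
g_{ki}\nabla_l\lambda_j+g_{kj}\nabla_l\lambda_i-g_{li}\nabla_k\lambda_j-g_{lj}\nabla_k\lambda_i,
\end{equation*}
and since $\lambda$ is (locally) exact, $\nabla_l\lambda_j$ is a symmetric tensor, indeed $\nabla_l\lambda_j=\tfrac12 M_{lj}$ with $M$ the Hessian $M^i{}_j=2\nabla^i\nabla_j\tr L$.

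The remaining task — and the genuine technical step — is the purely algebraic verification that the right-hand side, viewed as a quadratic expression in $l,k,i,j$ antisymmetric in $l,k$ and in $i,j$, reorganizes precisely as the commutator $[e_l\wedge e_k,\,M]$ in $\so(g)$. Under the identification $\Lambda^2V\cong\so(g)$ sending $u\wedge v$ to the skew-symmetric endomorphism $w\mapsto g(v,w)u-g(u,w)v$, left- and right-multiplication of a bivector by a symmetric operator $M$ produces exactly the four boundary terms $g_{ki}M_{lj}-g_{li}M_{kj}-g_{lj}M_{ki}+g_{kj}M_{li}$; matching this with the four-term expression above will complete the identity $[R(X),L]=[X,M]$ for $X=e_l\wedge e_k$, and hence for all $X\in\so(g)$ by linearity.

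The main obstacle, in my view, is not the differentiation step but this last bookkeeping: one must be careful with the sign and index conventions in the isomorphism $\Lambda^2V\cong\so(g)$ and in the action of $R$ as an operator on $\so(g)$, so that the four symmetrization terms assemble into a clean commutator rather than into a mixed expression still containing spurious symmetric pieces. Once the conventions are fixed, the identity is essentially forced by the antisymmetry/symmetry pattern; self-adjointness of $R$ with respect to the Killing form is then automatic from $R_{ij,kl}=R_{kl,ij}$ as noted earlier in the paper.
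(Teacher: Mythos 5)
Your overall strategy is the right one, and it is in fact the proof the paper itself points to but never writes down: the paper quotes Theorem \ref{BMK} from \cite{BolsMatvKios} with the single remark that it is ``a reformulation of the second Sinjukov equation'', and the second Sinjukov equation is precisely what your first two steps generate --- differentiate $\nabla_k L_{ij}=g_{ki}\lambda_j+g_{kj}\lambda_i$, antisymmetrize in the derivative indices, and convert the left-hand side into curvature terms by the Ricci identity. Those steps, including the recognition of the curvature terms as $[R(e_l\wedge e_k),L]$ under the identification $u\wedge v = u\otimes g(v)-v\otimes g(u)$, $R(u\wedge v)=R(u,v)$, are correct.

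The gap is the step you explicitly defer, and it is not innocent bookkeeping: it carries the entire quantitative content of the statement, namely that $M$ is the Hessian of $2\tr L$ rather than some other multiple of the Hessian. Two concrete problems arise there. First, your only numerical claim in that step is internally inconsistent: tracing the Sinjukov equation with $g^{ij}$ forces $\lambda_i=\tfrac12\nabla_i\tr L$, so with $M^i{}_j=2\nabla^i\nabla_j\tr L$ one gets $\nabla_l\lambda_j=\tfrac14 M_{lj}$, not $\tfrac12 M_{lj}$. Second, if one actually completes the matching in the conventions you fixed, writing $H^i{}_j=\nabla^i\nabla_j\tr L$ for the Hessian operator, the purely algebraic identity that comes out is
\begin{equation*}
g_{ki}\nabla_l\lambda_j+g_{kj}\nabla_l\lambda_i-g_{li}\nabla_k\lambda_j-g_{lj}\nabla_k\lambda_i
\;=\;-\tfrac12\,\bigl[\,e_l\wedge e_k,\,H\,\bigr]_{ij},
\end{equation*}
so what your argument proves is $[R(X),L]=\mp\tfrac12[X,H]=\mp\tfrac14[X,M]$ (the sign depending on the curvature sign convention), not $[R(X),L]=[X,M]$. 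In other words, the identity is ``forced by the symmetry pattern'' only up to a nonzero constant, and under your stated conventions that constant is $\mp\tfrac14$, contradicting the normalization in the theorem. To land exactly on $M=2H$ one must use a differently normalized map $R:\so(g)\to\so(g)$ and/or isomorphism $\Lambda^2V\cong\so(g)$ (e.g.\ full contraction of $R_{ijkl}$ against the bivector together with a factor $\tfrac12$ in the identification), and pinning that down is exactly the verification your proposal omits. For the paper's later use of the theorem the constant is irrelevant --- only the existence of some symmetric $M$ with $[R(X),L]=[X,M]$ matters --- but as a proof of the statement as written, with its specific $M$, the argument stops where the nontrivial check begins, and the one constant it asserts there is wrong.
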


It turns out that there is an elegant explicit formula expressing $R(X)$ in terms of $L$ and $M$.  To get this formula, one first needs to notice that \eqref{sect1} immediately implies that $M$ belongs to the centre of the centraliser of $L$ and, therefore, can be written as $M=p(L)$ where $p(t)$ is a certain polynomial. Then
\begin{equation}
\label{curv}
R(X) = \frac{d}{dt}\big|_{t=0} p(L + tX)
\end{equation}
satisfies \eqref{sect1}.
To check this, it is sufficient to differentiate the identity
$$
[p(L+tX), L+tX]=0
$$
with  respect to $t$ to get
$$
[\frac{d}{dt}\big|_{t=0} p(L + tX), L] + [p(L), X] =0
$$
i.e., $[R(X), L] + [M,X]=0$ as needed.

In the case of affinely equivalent metrics (we are going to deal with this case only!),  $L$ is automatically covariantly constant and, therefore, $M=0$. Thus, the curvature tensor  $R$  satisfies a simpler equation
$$
[R(X), L] =0,
$$
which, of course, directly follows from $\nabla L=0$ and seems to make all the discussion above not relevant to our very particular situation. 
However,   formula \eqref{curv}   still defines a non-trivial operator, if  $p(t)$ is a non-trivial  polynomial satisfying $p(L)=M=0$, for example the minimal polynomial for  $L$.

This discussion gives us a very good candidate for the role of a formal curvature tensor in our construction, namely,   the operator defined by \eqref{curv} with $p(t)$ being the minimal polynomial of $L$. As we show below, this operator satisfies all the required conditions and this observation plays a crucial role in the proof of Theorem~\ref{main} which is given in the next three sections.


\section{Step one. Reduction to the case of a single real eigenvalue or a~pair of complex conjugate eigenvalues}\label{reduction}

Let $g$ be a non-degenerate bilinear form on $V$ and  $L:V\to V$ be a $g$-symmetric operator. First of all, we notice that it is sufficient to prove Theorem \ref{main} for two special cases only:
\begin{itemize}
\item either $L$ has a single real eigenvalue,
\item or $L$ has a pair of complex conjugate eigenvalues.
\end{itemize}

The reduction from the general case to one of these is standard.  If $L$ has several eigenvalues, then $V$ can be decomposed into $L$-invariant subspaces
$$
V= V_1 \oplus V_2 \oplus \dots \oplus V_s
$$
where $V_i$ is either a generalised eigensubspace corresponding to a real eigenvalue $\lambda_i$, or  a similar subspace corresponding to a pair of complex conjugate eigenvalues $\lambda_i$ and $\bar \lambda_i$.  

This decomposition is $g$-orthogonal.  Indeed, assume that $V_i$ is related to a real eigenvalue $\lambda_i$ and consider the operator $(L - \lambda_i)^k$, where $k\in \mathbb N$ is sufficiently large. 
Then each $V_j$ is invariant with respect to this operator.   Moreover, we have  $(L - \lambda_i)^k V_i = 0$ and $(L - \lambda_i)^k V_j = V_j$ for $i\ne j$.  Now, since $L$ is $g$-symmetric we get
$$
0 = g\bigl((L - \lambda_i)^k V_i , V_j\bigr)= g \bigl(V_i, (L - \lambda_i)^k V_j\bigr) = g\bigl(V_i,  V_j\bigr),
$$
as needed. 

If $V_i$ corresponds to a pair of complex conjugate eigenvalues $\lambda_i$ and $\bar\lambda_i$,  then the same argument is applied to the operator $\bigl( (L - \lambda_i)(L-\bar\lambda_i)\bigr)^k$.

Furthermore,  the group $G^0_L$ is compatible with this decomposition in the sense that
$G^0_L$ is the direct product of the Lie groups $G_1,\dots, G_s$  each of which is naturally associated with $V_i$ and is the connected component of the centraliser of $L|_{V_i}$ in $\OO (g|_{V_i})$.  This follows immediately from the fact that every operator $X$ commuting with $L$ leaves  the generalised eigenspaces $V_j$'s invariant.

Thus,  $G^0_L$ is reducible, and therefore  $G^0_L$ is a holonomy group if and only if so is each $G_i$.  For our purposes we need  a weaker version of this statement:  if each $G_i$ is a holonomy group, then so is $G^0_L$.  The explanation of this fact is very simple. If we  can realise each $G_i$ as a holonomy group on a certain pseudo-Riemannian manifold $M_i$, then  the holonomy group for the (pseudo)-Riemannian direct product manifold  $M=M_1\times \dots\times M_s$ will be exactly  $G^0_L = G_1 \times \dots \times  G_s$.  A similar reduction obviously takes place for the corresponding Lie algebras. Indeed, the Lie algebra $\goth g_L$ of $G^0_L$  splits into the direct sum $\goth g_1\oplus \dots \oplus \goth g_s$,  and $\goth g_L$ is Berger if and only if so is each $\goth g_i$, $i=1,\dots, s$.

Thus, from now on we assume that the $g$-symmetric operator $L$ has either a single real eigenvalue or two complex conjugate eigenvalues. For the matrix computations in the next section, we will need the explicit form of $\goth g_L$ in this reduced case.  In fact the case of a pair of complex conjugate eigenvalues is quite similar to the real case.  Below we concentrate on the case when $\lambda\in\mathbb R$ and  all the necessary amendments related to the complex situation will be discussed in Appendix.

In the real case, we use the following well-known analog of the Jordan normal form theorem for $g$-symmetric operators in the case when $g$ is pseudo-Euclidean (see, for example, \cite{Lancaster, Thompson}).

\begin{proposition}\label{Jordan}
Let $L:V\to V$ be a $g$-symmetric operator with a single eigenvalue $\lambda\in\mathbb R$. Then by an appropriate choice of a basis in $V$,   we can simultaneously reduce $L$ and $g$ to the following block diagonal matrix form:
\begin{equation}
\label{Lg}
L=\begin{pmatrix}
L_1 &  & & \\
& L_2  & & \\
& & \ddots & \\
& & & L_k
\end{pmatrix}, \qquad
g=\begin{pmatrix}
g_1 &  & & \\
& g_2  & & \\
& & \ddots & \\
& & & g_k
\end{pmatrix}
\end{equation}
where 
$$
L_i=\begin{pmatrix}
\lambda & 1 & & &\\
& \lambda & 1 &  & \\
& & \ddots &\ddots & \\
   &  & & \lambda & 1\\
   & & & &  \lambda
   \end{pmatrix}, \quad  \mbox{and} \quad   g_i=\pm\begin{pmatrix}
 &  & & & 1\\
   &  &  & 1 & \\
   &  & \iddots & &\\
   & 1& &  &  \\
  1 & & & &
   \end{pmatrix}.
   $$
The blocks $L_i$ and $g_i$ are of the same size  $n_i\times n_i$,  and
   $n_1\le n_2 \le \dots \le n_k$.  
As a particular case, we admit $1\times 1$ blocks
   $L_i=\lambda$ and $g_i=\pm 1$.
 \end{proposition}

In this proposition and below,  we use the same notation $L$  for the operator and its matrix.
This does not lead to any confusion because from now on we can choose and fix a canonical basis. The same convention is applied to the bilinear form $g$ and its matrix.
In what follows,  we shall assume that $g_i$  has  $+1$ on the antidiagonal.  This assumption is not very important,  but  allows us to simplify the formulae below.

The next statement  gives an explicit matrix description for $\goth{so}(g)$ and the Lie algebra $\goth g_L$  of the group $G^0_L$   for $L$ and $g$ described in Proposition~\ref{Jordan}. The proof is straightforward and we omit it.

\begin{proposition}
\label{description}
In the canonical basis from Proposition \ref{Jordan},  the orthogonal Lie algebra $\so(g)$ consists of block matrices of the form
\begin{equation}
\label{X}
X =
\begin{pmatrix}
  X_{11}& X_{12} & \cdots & X_{1k}  \\
 X_{21} & X_{22}  & \cdots & X_{2k}  \\
 \vdots & \vdots & \ddots & \vdots \\
 X_{k1} & X_{k2} & \cdots & X_{kk} \\
 \end{pmatrix}
\end{equation}
where $X_{ij}$ is an $n_i\times n_j$ block.
The diagonal blocks $X_{ii}$'s are skew-symmetric with respect to their antidiagonal.  The
off-diagonal blocks $X_{ij}$ and $X_{ji}$  are related by
$$
X_{ji} = - g_j  X_{ij}^\top g_i.
$$
More explicitly:
 \begin{equation}
 \label{Xij}
X_{ij} =
\begin{pmatrix}
   x_{11} & \cdots & x_{1n_j}  \\
   \vdots & \ddots & \vdots    \\
    x_{n_i1}& \cdots & x_{n_i n_j}  \\
 \end{pmatrix} , \quad
 X_{ji} =
\begin{pmatrix}
   -x_{n_in_j} & \cdots & -x_{1n_j}  \\
   \vdots & \ddots & \vdots    \\
    -x_{n_i1}& \cdots & -x_{11}  \\
\end{pmatrix}
\end{equation}

The Lie algebra $\goth g_L$ consists of block matrices of the form:
\begin{equation}
\left( {\begin{array}{cccc}
  0 & M_{12} & \cdots & \!\!\!\!\!\! M_{1k}  \\
 M_{21} & 0  &  &  \!\!\!\!\!\! \vdots  \\
 \vdots &  & \ddots &  \! M_{k-1,k} \\
 M_{k1} & \cdots & M_{k, k-1} &  \!\!\!\!\!\!\!\!\! 0\\
 \end{array} } \right)
\label{m1}
\end{equation}
where $M_{ij}$'s for $i<j$ are  $n_i\times n_j$ matrices of the form
\begin{equation}
\label{Mij}
M_{ij} = \begin{pmatrix}
   0          & \cdots & 0 & \mu_1 & \mu_{2} &  \cdots &  \mu_{n_i}  \\
    0         & \cdots & 0 &      0       & \mu_{1} &  \ddots &  \vdots     \\
    \vdots & \ddots & \vdots & \vdots &  \ddots &  \ddots   & \mu_2 \\
    0         & \cdots  & 0        & 0        & \cdots & 0 &  \mu_1
\end{pmatrix}, \  \mu_i\in \R.
\end{equation}
If $n_i=n_j$, then $M_{ij}$ is a square matrix and the first zero columns are absent.  The blocks $M_{ij}$ and $M_{ji}$ are related in the same way as  $X_{ij}$ and $X_{ji}$, i.e.,     $ M_{ji} = - g_j  M_{ij}^\top g_i$.

   The subspace $\goth m_{ij} \subset \goth g_L$   $(i < j)$ that consists of two blocks $M_{ij}$ and $M_{ji}$ is a commutative subalgebra of dimension $n_i$.  As a vector space,   $\goth g_L$ is the direct sum $\sum_{i<j} \goth m_{ij}$.  In particular, $\dim \goth g_L = \sum_{i=1}^k (k-i) n_i$.
\end{proposition}

Our next goal is to prove that the (matrix) Lie algebra $\goth g_L$ described in this Proposition is Berger.


\section{Step two: Berger's  test}\label{Berger}

We consider a non-degenerate bilinear form $g$ on a finite-dimensional real vector space $V$,  and a $g$-symmetric linear operator $L: V\to V$, i.e.,
$$
g(Lv, u)= g(v, Lu), \quad \mbox{for all } u,v\in V.
$$

As before, by $\so(g)$ we denote the Lie algebra of the orthogonal group associated with $g$. Recall that this Lie algebra consists  of $g$-skew-symmetric operators:
$$
\so(g)=\{  X:V\to V~|~ g(Xv, u)=-g(v, Xu), \quad u,v\in V\}.
$$

Consider the Lie algebra $\goth g_L$ of the group $G^0_L$: 
$$
\goth g_L =  \{ X\in \so(g)~|~  XL-LX=0\}.
$$
We are going to verify in this section that $\goth g_L$ is a Berger algebra.

Following  Definition~\ref{defBerger},  we need to describe the formal curvature tensors  $R: \Lambda^2 V \to \goth g_L$ and analyse the subspace in $\goth g_L$ spanned by their images.  In particular, if we can find just one single formal curvature tensor $R$ such that
$\mathrm{Im}\, R = \goth g_L$, then our goal will be achieved.

In general, this is a rather difficult problem because the Bianchi identity represents a highly non-trivial system of linear relations.  However,  as was explained in Section \ref{sectional},  we have a very good candidate for the role of $R$.

In what follows,  we use the following natural identification of $\Lambda^2 V$ and $\so(g)$:
$$
 \Lambda^2 V \longleftrightarrow so(g),      \quad    v\wedge u = v\otimes g(u) - u \otimes g(v).
$$
Here the bilinear form $g$ is understood as an isomorphism $g:V \to V^*$  between vectors and covectors.
Taking into account this identification, we define the linear mapping $R: \so(g)\simeq \Lambda^2V \to \gl(V)$
by:
 \begin{equation}
 \label{r}
 R(X)= \frac{d}{dt} \big| _{t=0} p_{\mathrm{min}}(L+tX),
 \end{equation}
where $p_{\mathrm{min}}(t)$ is the minimal polynomial of $L$.

\begin{proposition} Let $L:V \to V$ be a $g$-symmetric operator.
Then  \eqref{r}  defines a formal curvature tensor   $R: \Lambda^2V \simeq \so(g) \to \goth g_L$  for the Lie algebra $\goth g_L$.
In other words, $R$ satisfies the Bianchi identity and its image is contained in $\goth g_L$.
\end{proposition}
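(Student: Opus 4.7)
The proposition makes two claims: (i) $R(X) \in \goth g_L$ for every $X \in \so(g)$, and (ii) $R$ satisfies the Bianchi identity. I would treat them separately.

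For (i), first observe that $R(X)$ commutes with $L$ by the argument already given in Section 4: differentiating the tautology $[p_{\min}(L+tX), L+tX] \equiv 0$ at $t=0$ yields $[R(X),L] + [p_{\min}(L), X] = 0$, and $p_{\min}(L) = 0$ by definition of the minimal polynomial. Next, to see that $R(X)$ is $g$-skew-symmetric, I would expand
\[
 R(X) \;=\; \sum_{k} a_k \sum_{j=0}^{k-1} L^j X L^{k-1-j},
\]
where $p_{\min}(t)=\sum_k a_k t^k$, and take the $g$-adjoint termwise: since $L$ is $g$-symmetric, $(L^j X L^{k-1-j})^\ast = L^{k-1-j} X^\ast L^j = -L^{k-1-j} X L^j$, and after reindexing the inner sum collapses to $-R(X)$. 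Hence $R(X) \in \so(g) \cap \{L\}' = \goth g_L$.

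For (ii), the Bianchi identity, I would use the same monomial expansion and prove the identity on each summand $L^j (u\wedge v) L^{m}$ with $m=k-1-j$. Under the identification fixed in the text, one computes
\[
 L^j (u\wedge v) L^{m}(w) \;=\; g(L^{m} v,w)\, L^j u \;-\; g(L^{m} u,w)\, L^j v,
\]
using that $L$ (hence $L^m$) is $g$-symmetric. Summing cyclically over $(u,v,w)$ and collecting the coefficients of $L^j u$, $L^j v$, $L^j w$, each bracket has the form $g(L^m a,b) - g(L^m b,a)$, which vanishes by the symmetry of $g$ together with the $g$-symmetry of $L^m$. Thus every monomial satisfies Bianchi, and therefore so does $R$.

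The only genuinely delicate point is the Bianchi verification; everything else is a direct adjoint or derivative computation. Even the Bianchi part becomes transparent once one rewrites $L^j(u\wedge v)L^m$ in the displayed form — the crucial input is that $g$-symmetry of $L$ promotes $L^m$ past the bilinear form. (Conceptually, the same calculation is cleaner via the resolvent identity $R(X) = \tfrac{1}{2\pi i}\oint p_{\min}(\lambda)(\lambda - L)^{-1}X(\lambda - L)^{-1}\,d\lambda$ together with the observation that, for $A$ any $g$-symmetric operator, $A(u\wedge v)A = (Au)\wedge(Av)$; but this is optional and I would only mention it as a remark.)
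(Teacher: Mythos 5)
Your proposal is correct and follows essentially the same route as the paper: commutation with $L$ is obtained by differentiating the identity $[p_{\mathrm{min}}(L+tX),\,L+tX]=0$ at $t=0$, and the Bianchi identity is verified by reducing $R$ to monomials of the form $CXD$ with $C,D$ powers of $L$ (hence $g$-symmetric) and checking that the cyclic sum vanishes. The only cosmetic difference is that the paper derives $g$-skew-symmetry of $R(X)$ via the substitution $p_{\mathrm{min}}(L+tX)^{*}=p_{\mathrm{min}}(L-tX)$ instead of your termwise reindexing, which amounts to the same computation.
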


The proof consists of two lemmas.

\begin{lemma}
\label{lem1}
The image of $R$ is contained in $\mathfrak {g}_L $.
\end{lemma}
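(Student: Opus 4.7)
The plan is to verify two things: (i) $R(X)$ commutes with $L$, and (ii) $R(X)$ is $g$-skew-symmetric, since together these are equivalent to $R(X) \in \goth{g}_L = \goth{so}(g) \cap \mathrm{Cent}(L)$.

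For (i), I would exploit the universal identity that any polynomial commutes with its own argument. Concretely, for every $t$ and every polynomial $p$ we have
\[
[p(L+tX),\, L+tX] = 0.
\]
Taking $p=p_{\mathrm{min}}$ and differentiating in $t$ at $t=0$ gives
\[
\Bigl[\tfrac{d}{dt}\big|_{t=0} p_{\mathrm{min}}(L+tX),\, L\Bigr] + [p_{\mathrm{min}}(L),\, X] = 0.
\]
The second bracket vanishes because $p_{\mathrm{min}}(L)=0$ by definition of the minimal polynomial, so $[R(X),L]=0$, i.e.\ $R(X)$ lies in the centralizer of $L$ in $\gl(V)$.

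For (ii), I would use the fact that the $g$-adjoint, call it $\ast$, is an anti-involution on $\gl(V)$ and hence commutes with any polynomial in one variable: $q(A)^{\ast} = q(A^{\ast})$ for any polynomial $q$. Since $L$ is $g$-symmetric ($L^{\ast}=L$) and $X\in\so(g)$ is $g$-skew-symmetric ($X^{\ast}=-X$), we have $(L+tX)^{\ast} = L - tX$, whence
\[
p_{\mathrm{min}}(L+tX)^{\ast} = p_{\mathrm{min}}(L-tX).
\]
Differentiating at $t=0$ yields $R(X)^{\ast} = -R(X)$, so $R(X)\in\so(g)$.

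Combining the two statements gives $R(X)\in\so(g)\cap\mathrm{Cent}(L)=\goth{g}_L$, as required. I do not expect a real obstacle here; the only delicate point is keeping track of the sign in step (ii), which is automatic once one notes that $\ast$ is an anti-homomorphism that nevertheless preserves one-variable polynomial evaluation.
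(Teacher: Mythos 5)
Your proposal is correct and follows essentially the same route as the paper: the commutation $[R(X),L]=0$ is obtained by differentiating $[p_{\mathrm{min}}(L+tX),L+tX]=0$ and using $p_{\mathrm{min}}(L)=0$, and the skew-symmetry $R(X)^{*}=-R(X)$ follows from $(L+tX)^{*}=L-tX$ together with the compatibility of the $g$-adjoint with one-variable polynomial evaluation. The only difference is the order of the two steps, which is immaterial.
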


\begin{proof}

First we check that  $R(X) \in \so(g)$, i.e.,   $R(X)^* = -R(X)$, where  $*$ denotes  ``$g$--adjoint'':
$$
g(A^*u,v)=g(u, Av), \qquad  u,v\in V.
$$

Since  $L^{\ast}=L$, $X^{\ast}=-X$,
$(p_{\mathrm{min}}(L+tX))^{\ast}=p_{\mathrm{min}}(L^{\ast}+tX^{\ast})$ and $"\frac {d}{dt}"$ and $"\ast"$ commute, we have
$$
\begin{aligned}
R(X)^* & = \frac{d}{dt}\big|_{t=0} p_{\mathrm{min}}(L+tX)^*  =
\frac{d}{dt}\big|_{t=0} p_{\mathrm{min}}(L^*+tX^*)= \\
& =\frac{d}{dt}\big|_{t=0} p_{\mathrm{min}}(L-tX)=
-\frac{d}{dt}\big|_{t=0} p_{\mathrm{min}}(L+tX)= -R(X),
\end{aligned}
$$
as needed.  Thus, $R(X) \in \so(g)$. Notice that this fact holds true for any polynomial $p(t)$, not necessarily minimal.

To prove that $R(X)$ commutes with $L$, we
consider the obvious identity
$$
[p_{\mathrm{min}}(L+tX),L+tX]=0.
$$
and differentiate it at $t=0$:
$$
[\frac{d}{dt}\big|_{t=0}p_{\mathrm{min}} (L+tX),L]+[p_{\mathrm{min}}(L),X]=0.
$$
Now, $p_{\mathrm{min}}(L)=0$ as it is a minimal polynomial.
Hence $[R(X),L]=0$, and therefore  $R(X) \in \goth {g}_L$. \end{proof}

\begin{lemma}
\label{lem2}
$R$ satisfies the Bianchi identity, i.e.
$$
R(u\wedge v)w+R(v\wedge w)u+R(w\wedge u)v=0\quad \mbox{ for all } u,v,w \in V.
$$
\end{lemma}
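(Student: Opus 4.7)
The plan is to reduce the Bianchi identity to a termwise identity in the monomial expansion of $p_{\mathrm{min}}$. If $p_{\mathrm{min}}(\lambda)=\sum_k c_k\lambda^k$, then differentiating $(L+tX)^k$ at $t=0$ gives
$$
R(X) \;=\; \sum_k c_k \sum_{i+j=k-1} L^{i}\,X\,L^{j},
$$
so it suffices to check the Bianchi identity for each elementary map $X\mapsto L^{i}XL^{j}$ separately and then sum with coefficients $c_k$. Note that this reduction uses nothing special about $p_{\mathrm{min}}$; any polynomial in $L$ would work equally well (which is consistent with the remark in Section \ref{sectional} that Bianchi is ``generic'' for sectional operators).

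Next I would unpack the three cyclic terms using the identification $(a\wedge b)c=g(b,c)a-g(a,c)b$. A direct computation gives
$$
L^{i}(u\wedge v)L^{j}w \;=\; g(v,L^{j}w)\,L^{i}u \;-\; g(u,L^{j}w)\,L^{i}v.
$$
Summing the three cyclic shifts in $(u,v,w)$, the coefficient of $L^{i}u$ collects into $g(v,L^{j}w)-g(w,L^{j}v)$, and the coefficients of $L^{i}v$ and $L^{i}w$ are obtained from this one by cyclic permutation.

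The only algebraic input needed is the $g$-symmetry of each power of $L$: since $L^{\ast}=L$ we have $(L^{j})^{\ast}=L^{j}$, and together with the symmetry of $g$ this yields
$$
g(v,L^{j}w) \;=\; g(L^{j}v,w) \;=\; g(w,L^{j}v).
$$
Hence each of the three coefficients vanishes identically, so every monomial $L^{i}XL^{j}$ satisfies the Bianchi identity termwise, and therefore so does their linear combination $R$. I do not anticipate a real obstacle here: the whole argument is a pairwise cancellation whose sole ingredient is the self-adjointness of $L$ with respect to $g$; the minimal polynomial assumption from \eqref{r} is reserved for Lemma \ref{lem1} via $p_{\mathrm{min}}(L)=0$, and plays no role in the Bianchi check.
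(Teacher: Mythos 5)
Your proof is correct and follows essentially the same route as the paper: both reduce $R$ to a sum of elementary maps $X\mapsto CXD$ (in your case $C=L^{i}$, $D=L^{j}$), expand the wedge action on vectors, and cancel the cyclic sum using the $g$-symmetry of the right-hand factor. Your closing observation that the minimal-polynomial property is irrelevant here and only enters in Lemma \ref{lem1} also matches the paper's own remarks.
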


\begin{proof}
It is easy to see that our operator  $R: \Lambda^2 V\simeq \so(g) \to \gl(V)$  can be written as $R(X)=\sum_k C_kXD_k$, where $C_k$ and $D_k$ are some $g$-symmetric operators  (in our case these operators are some powers of $L$).  Thus, it is sufficient to check the Bianchi identity for operators of the form $X \mapsto CXD$.  

For $X=u\wedge v$ we have
$$
C(u\wedge v)Dw=  Cu \cdot g(v, Dw) - Cv \cdot g(u, Dw)
$$
Similarly, if we cyclically permute $u,v$ and $w$:
\begin{equation*}
C(v\wedge w)Du=  Cv \cdot g(w, Du) - Cw \cdot g(v, Du)
\end{equation*}
and
\begin{equation*}
C(w\wedge u)Dw=  Cw \cdot g(u, Dv) - Cu \cdot g(w, Dv).
\end{equation*}
Adding these three expressions and  taking into account that both $C$ and $D$ are $g$-symmetric, we obtain zero, as required. \end{proof}

The construction presented above is invariant and and can be applied to any $g$-symmetric operator $L$, in particular, with distinct eigenvalues.  To complete the proof we need to compute the image of   \eqref{r}  and compare it with $\goth g_L$.  We are going to do this by means of matrix linear algebra, and from now on we consider the reduced case with a single real eigenvalue $\lambda\in\mathbb R$   described in Proposition \ref{Jordan} (the case of a pair of complex conjugate eigenvalues is discussed in the Appendix).  Replacing $L$ by $L-\lambda$, we can assume without loss of generality that $\lambda = 0$, i.e., $L$ is nilpotent. 

Proposition \ref{Jordan} implies that in the case of a single Jordan block  the algebra $\goth g_L$ is trivial and thus we begin with the first non-trivial case  when $L$ consists of two Jordan blocks $L_1$ and $L_2$. 


\begin{proposition}
\label{twoblocks}
 Let $L:V \to V$ be a $g$-symmetric nilpotent operator that consists of two Jordan blocks. Then the image of
the formal curvature tensor  $R: \Lambda^2V \simeq \so(g) \to \goth g_L$ defined by \eqref{r} coincides with $\goth g_L$.  In particular, $\goth g_L$ is Berger. 
\end{proposition}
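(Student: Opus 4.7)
The plan is to carry out an explicit case analysis based on the Jordan-plus-$g$ normal form of a $g$-symmetric nilpotent two-block operator. By the classification recalled in the Appendix, the possibilities for $(L,g)$ are, up to signs, the following: (i) two blocks of unequal sizes $k_1>k_2$, each self-paired by $g$; (ii) two blocks of equal size $k$, each self-paired; (iii) two blocks of equal size $k$, cross-paired by $g$. In each case one fixes a standard basis in which $L$ is a direct sum of copies of the Jordan shift $L_0$ and $g$ is built from the anti-diagonal matrix $J$. Since $L$ has nilpotency index $k_1$, the minimal polynomial is $\lambda^{k_1}$, and the candidate curvature tensor \eqref{r} becomes
\begin{equation*}
R(X)=\sum_{i+j=k_1-1}L^iXL^j.
\end{equation*}

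First I would describe $\goth g_L$ explicitly in each sub-case. The centralizer of $L$ in $\gl(V)$ consists of block matrices whose entries are (possibly shifted) polynomials in $L_0$. Imposing $X^\ast=-X$ cuts this down: on a diagonal block the skew condition $P(L_0)^TJ+JP(L_0)=0$, combined with the identity $JP(L_0)^TJ=P(L_0)$, forces $P(L_0)=0$, whereas the two off-diagonal blocks are coupled by an equation of the form $C=\pm JB^TJ$. The outcome is that in each of the three cases $\goth g_L$ is spanned by generators of the form $L^sZ$, $s=0,\dots,r-1$, where $Z$ is one specific primitive matrix and $r$ equals $k_2$ in case (i) and $k$ in cases (ii) and (iii).

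The computational core of the argument is the surjectivity of the one-block auxiliary map $\Phi_0(A)=\sum_{i+j=k_1-1}L_0^iAL_0^j$. A direct computation on matrix units gives
\begin{equation*}
\Phi_0(E_{ab})=\begin{cases}L_0^{k_1-1+b-a}&\text{if }b\le a,\\0&\text{otherwise,}\end{cases}
\end{equation*}
so $\Phi_0$ is surjective onto $\mathrm{span}(L_0^0,\dots,L_0^{k_1-1})$; an analogous formula on rectangular matrices handles the mixed blocks in case (i). For each basis vector $L^sZ$ of $\goth g_L$ one then produces a pre-image $X\in\so(g)$ by choosing one free block of $X$ to be a matrix unit that $\Phi_0$ sends to $L_0^s$ and letting the $g$-skew-symmetry constraint determine the remaining blocks. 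The identity $\Phi_0(JA^TJ)=\Phi_0(A)$, a consequence of $JL_0J=L_0^T$, guarantees that the resulting $R(X)$ automatically lies in $\goth g_L$ with the prescribed leading block.

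The main technical obstacle I expect is sub-case (i), where $k_1>k_2$ and one must work with the rectangular mixed blocks of $X$: here the commutation condition truncates the natural matrix-unit parameterisation and one has to check that a rectangular analogue of $\Phi_0$ still hits every allowed power of the shift. Sub-cases (ii) and (iii) then reduce to the square one-block computation together with the symmetry $JA^TJ=A$ for polynomial $A$, so once the rectangular case has been handled the argument closes in all three cases uniformly.
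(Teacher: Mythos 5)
Your strategy is in essence the paper's own (Lemma \ref{lem3}): put $(L,g)$ into the canonical form of Proposition \ref{Jordan}, observe that \eqref{r} acts on each block $X_{ij}$ of $X$ separately, dispose of the diagonal blocks and of the coupled block $X_{21}$ by a symmetry argument (the paper invokes Lemma \ref{lem1}; your identity $\Phi_0(JA^\top J)=\Phi_0(A)$ does the same job), and then prove surjectivity of the remaining off-diagonal component by explicit computation. Your square-block formula $\Phi_0(E_{ab})=L_0^{k_1-1+b-a}$ for $b\le a$ (and $0$ otherwise) is correct, and it is exactly the matrix-unit dual of the paper's computation of the coefficients $\mu_1=x_{m1}$, $\mu_2=x_{m-1,1}+x_{m2}$, \dots, so your cases (ii) and (iii) are in order. (Case (iii), incidentally, is not part of the classification recalled in Proposition \ref{Jordan}: for a $g$-symmetric nilpotent $L$ every Jordan block is self-paired up to sign, and a cross-paired pair of equal blocks is equivalent to case (ii) with signs $(+,-)$; carrying it separately is harmless but redundant.)

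The genuine gap is that the case you yourself call ``the main technical obstacle'' --- two blocks of unequal sizes $k_1>k_2$ --- is never actually carried out: you assert that ``an analogous formula on rectangular matrices handles the mixed blocks'' and later concede that ``one has to check'' that a rectangular analogue of $\Phi_0$ hits every generator. That unperformed check is precisely the content of the paper's computation \eqref{r12}, and unequal sizes is the generic two-block situation, so as written your argument proves the proposition only for equal block sizes. The check does go through, and is worth recording: for a $k_1\times k_2$ matrix unit $E_{ab}$, setting $c=b+k_1-a$, one finds
$$
\sum_{i+j=k_1-1} L_1^{i}\, E_{ab}\, L_2^{j}
\;=\; E_{1,c}+E_{2,c+1}+\dots+E_{k_2-c+1,\,k_2}
$$
when $1\le c\le k_2$, and $0$ when $c>k_2$. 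Indeed, the terms that would overshoot the first row are killed by $L_1^{i}$ and those that would overshoot the last column are killed by $L_2^{j}$, so each matrix unit is sent to a \emph{full} diagonal of the intertwiner space (the tall-block analogue of the band matrices \eqref{m}), not the truncated object you were worried about. Taking $a=k_1$ and $b=c$ for $c=1,\dots,k_2$ then yields preimages of all $k_2$ generators of $\goth g_L$, which is exactly the missing step.
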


\begin{proof}
We will get this result by straightforward computation in the canonical basis described in Proposition \ref{Jordan}.
Consider $L = \begin{pmatrix} L_1 & 0 \\ 0 & L_2 \end{pmatrix}$, where $L_1$ and $L_2$ are standard nilpotent Jordan blocks of size $m$ and $n$ respectively, $m\le n$.  The minimal polynomial for $L$ is $p_{\mathrm{min}}(t)= t^n$.

If we represent $X\in \so(g)$ as a block matrix  (the sizes of blocks are naturally related to $m$ and $n$)
$$
X = \begin{pmatrix}
X_{11} &  X_{12} \\
X_{21} & X_{22}
\end{pmatrix}
$$
 we then immediately see  that  our operator
\begin{equation*}
R =  \frac{d}{dt} \big| _{t=0} (L+tX)^n=L^{n-1}X+L^{n-2}XL+\cdots + XL^{n-1}
\end{equation*}
acts independently of each block of $X$, i.e.,
 \begin{equation}
 \label{r(x)}
R(X) =
\left( {\begin{array}{cc}
   R_{11}(X_{11}) & R_{12}(X_{12})  \\
    R_{21}(X_{21}) &R_{22}(X_{22}) \\
 \end{array} } \right)
\end{equation}

The blocks  $R_{11}(X_{11})$, $R_{12}(X_{12})$, $R_{21}(X_{21})$ and $R_{22}(X_{22})$ can be explicitly  computed, and we  shall see that  the image of $R$ is exactly our Lie algebra $\goth g_L$.

This computation can be essentially simplified,   if we take into account  the inclusion $\mathrm{Im}\, R \subset \goth g_L$ (Lemma \ref{lem1}) and the fact that  $\goth g_L$ consists of the block matrices of the form  $\begin{pmatrix} 0 & M_{12} \\ M_{21} & 0\end{pmatrix}$, where
\begin{equation}
\label{m}
M_{12}=
\begin{pmatrix}
   0          & \cdots & 0 & \mu_1 & \mu_{2} &  \cdots &  \mu_m  \\
    0         & \cdots & 0 &      0       & \mu_{1} &  \ddots &  \vdots     \\
    \vdots & \ddots & \vdots & \vdots &  \ddots &  \ddots   & \mu_2 \\
    0         & \cdots  & 0        & 0        & \cdots& 0 &  \mu_1
\end{pmatrix}
\end{equation}
 is $m\times n$  matrix, and  $M_{21} = - g_2 M_{12}^\top g_1$  (see Proposition \ref{description}).  Then
 without any computation we can conclude that $R_{11}(X_{11})=0$, $R_{22}(X_{22})=0$ and
 $R_{21}(X_{21})=- g_2 ^\top R_{12}(X_{12}) g_1$.

Thus,  we should only explain how the  parameters
$\mu_1, \dots, \mu_m$ of the block $M=R_{12}(X_{12})$  depend on the entries of
$$
X_{12}=\begin{pmatrix}
x_{11} & x_{12} & \ldots & x_{1n}\\
\vdots &  \vdots & \ddots  & \vdots \\
x_{m1} & x_{k2} & \ldots & x_{mn}
\end{pmatrix}
$$

We have
\begin{equation}
R_{12}(X_{12})=
L_1^{n-1} X_{12} + L_{1}^{n-2}X_{12} L_2 +\cdots + X_{12} L_2^{n-1}
\label{r12}
\end{equation}
and an easy computation gives
$$
\begin{aligned}
\mu_1&=x_{m1} ,\\
\mu_2&=x_{m-1,1}+x_{m2},\\
\mu_3&=x_{m-2,1}+x_{m-1,2}+x_{m3},\\
& \ \vdots \\
\mu_{k}&=x_{11}+x_{22}+ x_{33}+\cdots  +x_{mm}.
\end{aligned}
$$

Clearly,  there are no relations between $\mu_i$'s and therefore the image of $R$ coincides with $\goth g_L$, which completes the proof.\end{proof}

Thus,  formula \eqref{r}  solves the problem in the case of two blocks. Now, let us consider the case of  $k$ Jordan blocks, $k >2$.   In this case,  the image of the formal curvature tensor defined by \eqref{r} can be smaller than $\goth g_L$ and formula \eqref{r} needs to be modified.

We start with the following obvious  remark.
Let $V' \subset V$  be a subspace of $V$  such that $g'= g|_{V'}$ is non-degenerate. Consider  the standard embedding $\so(g') \to \so(g)$  induced by the inclusion $V' \subset V$. 
If
$R': \so(g') \to \so(g')$  is a formal curvature tensor, then
its trivial extension  $R: \so(g)  \to \so(g)$ defined by
$$
R \begin{pmatrix}  X & Y \\ Z & W \end{pmatrix} =
\begin{pmatrix}  R'(X) & 0 \\ 0 & 0 \end{pmatrix}
$$
is a formal curvature tensor too. In particular, if  $\goth h\subset \so(g')$ is a Berger subalgebra, then  $\goth h$ as a subalgebra of  $\so(g)$ will be also Berger.

This remark allows us to construct a ``big'' formal curvature tensor as the sum of ``small'' curvature tensors related to different pairs of Jordan blocks and in this way to reduce the general case to the situation treated in Proposition \ref{twoblocks}.

Consider the operator $\widehat R_{12} : \so(g) \to \so(g)$  defined by:
\begin{equation}
\widehat R_{12}\begin{pmatrix}
X_{11}& \!\!\!\!  X_{12} & \cdots & \!\! X_{1k}  \\
 X_{21} & \!\!\!\!  X_{22}  & \cdots & \!\! X_{2k}  \\
 \vdots & \vdots & \ddots & \vdots \\
 X_{k1} & \!\!\!\!  X_{k2} & \cdots & \!\! X_{kk}
\end{pmatrix}=
\begin{pmatrix}
 0 & R_{12}(X_{12}) &  \cdots & 0  \\
R_{21}( X_{21}) &  0  & \cdots & 0  \\
 \vdots & \vdots & \ddots & \vdots \\
0 & 0 & \cdots &  0
\end{pmatrix}
\label{R12}
\end{equation}
where $R_{12}(X_{12})$ and $R_{21}(X_{21})$ are defined as in Proposition \ref{twoblocks} (see \eqref{r(x)}, \eqref{r12}),  and all the other blocks in the right hand side vanish. Then applying the above remark to the the subspace $V'\subset V$ related to the first two blocks $L_1$ and $L_2$, we see that  $\widehat R_{12}$ is a formal curvature tensor and its image coincides with the Abelian subalgebra $\goth m_{12}\subset \goth g_L$ (see Proposition \ref{description}). In particular, $\goth m_{12}\subset \so(g)$ is a Berger algebra.

To construct the ``big''  formal curvature operator $R: \so(g) \to \goth g_L$  we simply do the same for each pair of blocks,  namely we set:
\begin{equation}
R\begin{pmatrix}
X_{11}& \!\!\!\!  X_{12} & \cdots & \!\! X_{1k}  \\
 X_{21} & \!\!\!\!  X_{22}  & \cdots & \!\! X_{2k}  \\
 \vdots & \vdots & \ddots & \vdots \\
 X_{k1} & \!\!\!\!  X_{k2} & \cdots & \!\! X_{kk}
\end{pmatrix}=
\begin{pmatrix}
 0 & \!\!\!\!\! R_{12}(X_{12}) & \!\!\!\! \cdots & \!\!\!\!\! R_{1k}( X_{1k})  \\
R_{21}( X_{21}) &  \!\!\!\!\! 0  &\!\!\!\!  \cdots & \!\!\!\!\! R_{2k}( X_{2k})  \\
 \vdots & \!\!\!\!\! \vdots &\!\!\!\!  \ddots &\!\!\!\!\!\!\!  \vdots \\
R_{k1}( X_{k1}) & \!\!\!\!\! R_{k2}( X_{k2}) & \!\!\!\! \cdots & \!\!\!\!\!\!\!  0
\end{pmatrix}
\label{rmodified1}
\end{equation}

In other words,  $R$ acts  independently on each block $X_{ij}$  (compare with the proof of Proposition \ref{twoblocks}), and each of its components
$$
R_{ij} :    X_{ij} \mapsto  R_{ij}(X_{ij})
$$
is defined in exactly the same way as in Proposition \ref{twoblocks} provided we ignore all the blocks of $L$ except for $L_i$ and $L_j$.  More precisely,
\begin{equation}
R_{ij} (X_{ij})= L_i^{n_{ij}-1} X_{ij} + L_{i}^{n_{ij}-2}X_{ij}L_j +\cdots + X_{ij}L_j^{n_{ij}-1},
\label{rmodified2}
\end{equation}
where $n_{ij}=\max\{n_i, n_j\}$ and $n_i$, $n_j$ are the sizes of the nilpotent Jordan blocks $L_i$ and $L_j$.  Notice that  the operators \eqref{rmodified1} and \eqref{r} are, in fact,  very similar. The latter also has the same block structure,  and the only difference is that in  \eqref{rmodified2}  instead of $n_{ij}=\max\{n_i, n_j\}$ we need to take the maximum over all $n_1,\dots,n_k$.  This leads to ``widening'' of the kernel of $R$ and ``reducing'' of its image.  To avoid such a situation, we need the modification \eqref{rmodified1}.

If we introduce the operators
$\widehat R_{ij}: \so(g) \to \so(g)$ by generalising \eqref{R12} for arbitrary indices $i<j$,  we can rewrite  \eqref{rmodified1} and \eqref{rmodified2}  as
\begin{equation}
R_{\mathrm{formal}}=R=\sum_{i<j} \widehat R_{ij}.
\label{rmodified3}
\end{equation}

\begin{proposition}
\label{manyblocks}
The operator $R_{\mathrm{formal}}=R$ defined by \eqref{rmodified1} and \eqref{rmodified2}   {\rm(}or equivalently by \eqref{rmodified3}{\rm )} is  a formal curvature tensor.  Moreover,  $\mathrm{Im}\, R = \goth g_L$ and, therefore,
$\goth g_L$ is a Berger algebra.
\end{proposition}

\begin{proof}
Since  each $\widehat R_{ij}$ is a formal curvature tensor,   so is $R$ by linearity.  The image of  $\widehat R_{ij}$ is  the subalgebra $\goth m_{ij}$.    From \eqref{rmodified1} it is easily seen that each $\widehat R_{ij}$ acts only on the blocks $X_{ij}$ and $X_{ji}$ and does not interact with other blocks at all.   This (together with Proposition \ref{description}) immediately implies that
$$
\mathrm{Im}\, R= \sum_{i<j} \mathrm{Im}\, \widehat R_{ij}=
\sum_{i<j} \goth m_{ij} = \goth g_L,
$$
as required. \end{proof}

This proposition tells us that $\goth g_L$ is Berger whenever $L$ has a single real eigenvalue $\lambda\in\R$. In the case of a pair of complex eigenvalues $\lambda$ and $\bar\lambda$, the proof needs just  few additional comments given in the Appendix.   Taking into account the reduction in Section~\ref{reduction}, we arrive at the following final conclusion.

\begin{theorem}
\label{ourBerger}
Let $L:V \to V$ be a $g$-symmetric operator. Then 
 $$
 \goth g_L =\{ X\in \goth{so}(g) ~|~ XL = LX\}
 $$ 
 is a Berger algebra.
\end{theorem}


\section{Step three: Geometric realisation}

Now for a given operator $L:T_{x_0}M \to T_{x_0}M$, we need to find a (pseudo)-Riemannian metric $g$ on $M$ and a $(1,1)$-tensor field $L(x)$  (with the initial condition $L(x_0)=L$) such that
\begin{enumerate}
\item $\nabla L(x)= 0$;
\item $\mathfrak{hol}\,(\nabla)=\goth g_L$.
\end{enumerate}

Notice that the first condition guarantees that $\mathfrak{hol}\,(\nabla) \subset \goth g_L$. On the other hand,
$\mathrm{Im}\, R(x_0) \subset \mathfrak{hol}\,(\nabla)$, where $x_0\in M$ is a fixed point and $R$ is the curvature tensor of $g$. So, taking into account Theorem \ref{ourBerger}, the second condition can be replaced by
\medskip

$2'$) $R(x_0)$ coincides with the formal curvature tensor   $R_{\mathrm{formal}}$ from Proposition \ref{manyblocks}.

\medskip

Thus, our goal in this section is to construct  (at least one example of)  $L(x)$ and $g(x)$ satisfying conditions 1 and $2'$.
Apart  from formula \eqref{r} (whose modification \eqref{r2} leads to the desired example),  the construction below is based on two well-known geometric facts.

The first one allows us to use a nice coordinate system in which all computations at a fixed point become much simpler.  Roughly speaking, the linear terms of $g$ as a function of $x$ can be ignored. 

\begin{proposition}
For every metric $g$ there exists a local coordinate system such that $\frac{\partial g_{ij}}{\partial x^\alpha}(0)=0$ for all $i,j,\alpha$. In particular, in this coordinate system we have $\Gamma_{ij}^k(0)=0$ and the components of the curvature tensor at $x_0=0$ are defined as some combinations of second derivatives of $g$.
\end{proposition}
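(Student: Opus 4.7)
The plan is to construct the required coordinates by the standard ``osculating inertial frame'' trick: an explicit quadratic change of variables that cancels the Christoffel symbols of $g$ at $x_0$. Starting from any local chart centred at $x_0$ (so $x_0\leftrightarrow 0$), denote by $\Gamma^k_{ij}(0)$ the Christoffel symbols of $g$ at the origin, and introduce new coordinates $y^i$ by the quadratic substitution
$$
x^i=y^i-\tfrac{1}{2}\,\Gamma^i_{jk}(0)\,y^j y^k,
$$
which is a valid diffeomorphism near $0$ since its Jacobian there is the identity.

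I would then verify that in the $y$-chart the Christoffel symbols vanish at the origin, by plugging into the transformation law
$$
\tilde\Gamma^i_{jk}(0)=\Gamma^i_{jk}(0)+\frac{\partial^2 x^i}{\partial y^j\partial y^k}(0),
$$
together with the identity $\partial^2 x^i/\partial y^j\partial y^k(0)=-\Gamma^i_{jk}(0)$ guaranteed by the construction. Once $\tilde\Gamma^i_{jk}(0)=0$ is established, the Koszul identity
$$
\partial_l g_{ij}=g_{im}\tilde\Gamma^m_{lj}+g_{jm}\tilde\Gamma^m_{li}
$$
(an immediate algebraic consequence of $\Gamma^k_{ij}=\tfrac{1}{2}g^{kl}(\partial_i g_{jl}+\partial_j g_{il}-\partial_l g_{ij})$) forces $\partial g_{ij}/\partial y^l(0)=0$ for all indices, which is the first claim of the proposition.

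For the second assertion I would simply expand the standard formula
$$
R^i{}_{jkl}=\partial_k\Gamma^i_{jl}-\partial_l\Gamma^i_{jk}+\Gamma^i_{km}\Gamma^m_{jl}-\Gamma^i_{lm}\Gamma^m_{jk}
$$
at the origin of the new chart: the quadratic $\Gamma\cdot\Gamma$ terms drop out because each factor vanishes, leaving $R(x_0)$ expressed purely through $\partial\Gamma(0)$; and each $\partial\Gamma$ is itself a linear combination of the second partial derivatives $\partial^2 g_{ij}(0)$.

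I do not anticipate any real obstacle here: the existence of such coordinates (often called normal or geodesic coordinates at a point) is a textbook result, and the explicit quadratic change above is the most economical route to it. The only mildly delicate point is to confirm that the connection-coefficient transformation law produces exactly the inhomogeneous term $\partial^2 x^i/\partial y^j\partial y^k$ at $0$, but this is a one-line computation in local coordinates and fully in line with the paper's own remark that the arguments gathered in the Appendix are standard for specialists in pseudo-Riemannian geometry.
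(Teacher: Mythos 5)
Your proof is correct: the quadratic substitution $x^i=y^i-\tfrac12\Gamma^i_{jk}(0)y^jy^k$ (using the symmetry $\Gamma^i_{jk}=\Gamma^i_{kj}$ of the Levi-Civita connection) kills the Christoffel symbols at the origin via the transformation law, metric compatibility then forces $\partial g_{ij}/\partial y^l(0)=0$, and the curvature formula at the origin reduces to second derivatives of $g$ exactly as you say. The paper states this proposition without any proof, treating it as a standard fact (existence of normal coordinates at a point), and your argument is precisely the standard construction, so it matches the paper's intent with the details filled in.
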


The second result states that covariantly constant $(1,1)$-tensor fields $L$ are actually very simple. 
To the best of our knowledge, this theorem was first proved by A.\,P.~Shirokov  \cite{Shirokov}
(see also \cite{Boubel2, Lehmann, GThom}).

\begin{theorem}
If $L$ satisfies $\nabla L=0$  for a symmetric connection $\nabla$, then there exists a local coordinate system $x^1,\dots , x^n$ in which $L$ is constant.
\end{theorem}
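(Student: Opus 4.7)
The plan is to reduce to the nilpotent case via a parallel eigenspace splitting, observe that $\nabla L=0$ together with the torsion-freeness of $\nabla$ forces the Nijenhuis tensor of $L$ to vanish, and then invoke the classical local normal form theorem for $(1,1)$-tensors with constant Jordan type and vanishing Nijenhuis torsion.

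First, parallel transport conjugates the operators $L_x$ at different points, so the Jordan type of $L$ is locally constant; in particular, the eigenvalues and the sizes of the Jordan blocks are constant functions. Each generalized eigenspace $V_\lambda(x)=\ker(L-\lambda\,\mathrm{Id})^n$ is therefore a smooth parallel distribution of constant rank. Since $\nabla$ is torsion-free, $[X,Y]=\nabla_XY-\nabla_YX$, and for $X,Y\in V_\lambda$ this bracket again lies in $V_\lambda$; hence $V_\lambda$ is involutive. The resulting foliations produce a local product decomposition of $M$ compatible with the splitting $TM=\bigoplus_\lambda V_\lambda$, and on each factor we may replace $L$ by $L-\lambda\,\mathrm{Id}$ (which remains parallel since $\lambda$ is a constant on the factor) to assume $L$ is nilpotent.

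Second, a direct computation shows that the Nijenhuis tensor
$$
N_L(X,Y)=L^2[X,Y]+[LX,LY]-L[LX,Y]-L[X,LY]
$$
vanishes identically: expanding each bracket as $\nabla_UV-\nabla_VU$ and moving $L$ across each covariant derivative using $\nabla L=0$, all eight terms cancel pairwise. Thus $L$ is a nilpotent $(1,1)$-tensor field of constant Jordan type whose Nijenhuis torsion is zero.

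Third, with $L$ of constant Jordan type and $N_L=0$, the classical Nijenhuis--Frobenius normal form theorem (Shirokov, Haantjes, Lehmann, Thompson) produces local coordinates in which the components of $L$ are constant, equal to its Jordan normal form; assembling these charts across the factors from the first step gives the coordinate system asserted in the theorem. The main obstacle is precisely this last step: the normal form theorem is proved by induction via Frobenius applied to the parallel involutive filtration $\ker L\subset\ker L^2\subset\cdots\subset TM$, with $N_L=0$ supplying the compatibility needed to align successive coordinate directions with the Jordan chains of $L$. In the nilpotent real setting this is a standard but nontrivial argument; an alternative route is to build a Jordan frame by parallel transport along radial geodesics from $x_0$ (using that $L$ is parallel, hence preserves Jordan chains along any curve) and then modify this frame by $L$-equivariant diffeomorphisms to make it coordinate.
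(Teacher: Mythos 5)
Your first two steps are sound: parallelism of $L$ makes each generalized eigenspace a parallel, hence (by torsion-freeness) involutive distribution, and the computation showing that $\nabla L=0$ for a symmetric $\nabla$ forces $N_L=0$ is correct. (A small caveat: for a pair of complex conjugate eigenvalues the substitution $L\mapsto L-\lambda\,\mathrm{Id}$ is not available over $\R$, so the reduction to the nilpotent case needs a separate argument there.) Note that the paper itself does not prove this theorem at all --- it quotes it from the literature (Shirokov \cite{Shirokov}; see also \cite{Boubel2, Lehmann, GThom}) --- so your argument must stand on its own, and it does not, because of the third step.

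The ``classical normal form theorem'' you invoke --- constant Jordan type plus $N_L=0$ implies local constancy --- is not a theorem: for nilpotent endomorphism fields it is false, and this is precisely the subject of the reference \cite{Boubel2}, where a counterexample is given and a genuinely stronger necessary and sufficient condition is established. Vanishing of the Nijenhuis tensor characterizes integrability in the semisimple (Haantjes-type) setting, not in the nilpotent one --- which is exactly the case this paper needs, since $\goth g_L$ is trivial unless $L$ has nontrivial repeated Jordan structure. One can see concretely why your hypothesis is too weak at the very first step of the Frobenius induction you sketch: for $X,Y\in\ker L$ one has $N_L(X,Y)=L^2[X,Y]$, so $N_L=0$ gives only $[\ker L,\ker L]\subset\ker L^2$, not involutivity of $\ker L$. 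The filtration $\ker L\subset\ker L^2\subset\cdots$ is involutive because it is \emph{parallel}, i.e.\ because of the connection --- which you discarded the moment you replaced the hypothesis $\nabla L=0$ by its much weaker consequence $N_L=0$. Your fallback sketch (parallel frames along radial geodesics, then ``modify by $L$-equivariant diffeomorphisms'') is likewise not an argument: a frame field in which $L$ has constant matrix always exists by $0$-deformability, and upgrading it to a \emph{coordinate} frame is the entire content of the theorem. A correct proof must keep the connection in play throughout (as Shirokov's does), or else verify the actual integrability criterion of \cite{Boubel2}, using parallelism of the kernels and images of all powers of $L$ as the input.
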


In this coordinate system the equation $\nabla L=0$ can be rewritten in a very simple way:

\begin{equation}
\label{linear}
\left(\frac{\partial g_{ip}}{\partial x^\beta} - \frac{\partial g_{i\beta}}{\partial x^p} \right) L^\beta_k =
\left(\frac{\partial g_{i\beta}}{\partial x^k} - \frac{\partial g_{i k}}{\partial x^\beta} \right) L^\beta_p
\end{equation}

This equation is linear and
if we represent $g$ as a power series in $x$,  then \eqref{linear} must hold for each term of this expansion.   Moreover, if we consider the constant and second order terms only, then they will  give us a particular (local) solution.

This suggest the idea to set $L(x)=\mathrm{const}$ and then try to find the desired metric $g(x)$ in the form:
$$
\mbox{constant  +  quadratic}
$$
or
\begin{equation}
\label{eq0}
g_{ij} (x)= g_{ij}^0 + \sum  \mathcal B_{ij, pq}x^p x^q
\end{equation}
where $ \mathcal B$ satisfies obvious symmetry relations, namely,  $ \mathcal B_{ij,pq}= \mathcal B_{ji,pq}$ and $ \mathcal B_{ij,pq}= \mathcal B_{ij, qp}$.

Before discussing the explicit formula for $ \mathcal B$, we give some general remarks about ``quadratic'' metrics
 \eqref{eq0}.

\begin{itemize}

\item The condition $\nabla L=0$ amounts to  the following equation for $\mathcal B$:
\begin{equation}
\label{important1}
( \mathcal B_{ip,\beta q} -  \mathcal B_{i\beta,pq}) L^\beta_k = (  \mathcal B_{\beta i,kq} - \mathcal B_{ik,\beta q} ) L^\beta_p
\end{equation}

\item The condition that $L$ is $g$-symmetric reads:
\begin{equation}
\label{important2}
 \mathcal B_{ij,pq}L^i_l  =  \mathcal B_{il,pq}L^i_j
\end{equation}

\item The curvature tensor of $g$ at the origin $x=0$ takes the following form:
\begin{equation}
\label{important3}
R^i_{k \, \alpha \beta} =
g^{is} (  \mathcal B_{\beta s,\alpha k} +  \mathcal B_{\alpha k, \beta s} -  \mathcal B_{\beta k, \alpha s} -  \mathcal B_{\alpha s, \beta k}),
\end{equation}
and, in particular,  $R$ (at the origin) depends on $ \mathcal B$  linearly:
$$
R_{\lambda_1  \mathcal B_1 + \lambda_2  \mathcal B_2} = \lambda_1 R_{ \mathcal B_1} + \lambda_2 R_{ \mathcal B_2}
$$

\end{itemize}

\medskip

Thus,  the realisation problem admits the following purely algebraic version:  find $ \mathcal B$  satisfying \eqref{important1}, \eqref{important2} and such that
\eqref{important3} coincides with $R_{\mathrm{formal}}$ from Proposition \ref{manyblocks}. From the formal viewpoint, this is a system of linear equations on $\mathcal B$ which we need to solve or just to guess a particular solution.

\medskip

{\bf Example.}
Consider the simplest case when
$$
g = g^0 + \mathcal  B(x,x), \quad  \mathcal B_{ij}(x,x)= \sum \mathcal  B_{ij,pq}x^p x^q\quad \mbox{with } \mathcal  B=\mathcal   C\otimes \mathcal  D,
$$
where $\mathcal  C$ and $\mathcal   D$ are the bilinear forms associated with the $g^0$-symmetric operators $C$ and $D$, i.e.,  $\mathcal  B_{ij,pq}= \mathcal  C_{ij}\cdot\mathcal  D_{pq}$, 
$\mathcal  C_{ij}=g^0_{i\alpha}C^\alpha_j$, $\mathcal  D_{pq}=g^0_{p\alpha}D^\alpha_q$. Then the conditions \eqref{important1}, \eqref{important2}, \eqref{important3} can respectively be rewritten (in terms of $C$ and $D$) as
\begin{equation*}
\label{important1'}
[CXD, L] +[CXD, L]^* =0 \quad \mbox{for any $X\in \gl(V)$},
\tag{$\ref{important1}'$}
\end{equation*}

\begin{equation*}
\label{important2'}
CL = LC
\tag{$\ref{important2}'$}
\end{equation*}

\begin{equation*}
\label{important3'}
R(X) = -CXD + (CXD)^*, \qquad  X\in \so(g^0)
\tag{$\ref{important3}'$}
\end{equation*}

Similarly, if  $\mathcal B=\sum_\alpha \mathcal C_\alpha \otimes \mathcal D_\alpha$,  then the corresponding conditions on $\mathcal B$ are obtained from ($\ref{important1}'$),  ($\ref{important2}'$),  ($\ref{important3}'$) by summing over $\alpha$.

These simple observations lead us to the following conclusion. Let $B=\sum C_\alpha \otimes D_\alpha$  where $C_\alpha$ and $D_\alpha$ are $g^0$-symmetric operators.  Consider $B$ as a linear map
$$
B : \gl(V) \to \gl(V)
\quad \mbox
{defined by $B(X)=\sum C_\alpha X D_\alpha$,}   
$$

In other words,  $B(X)$ is obtained from $B$ by ``replacing'' $\otimes$ by $X$.  Then for the corresponding quadratic metric $g = g^0 + \mathcal  B(x,x)$, the conditions 
 \eqref{important1}, \eqref{important2}, \eqref{important3} can be rewritten as
\begin{equation*}
\label{important1''}
[B(X), L] +[B(X), L]^* =0 \quad \mbox{for any $X\in \gl(V)$},
\tag{$\ref{important1}''$}
\end{equation*}

\begin{equation*}
\label{important2''}
[C_\alpha, L] = 0
\tag{$\ref{important2}''$}
\end{equation*}

\begin{equation*}
\label{important3''}
R(X) = -B(X) + B(X)^*, \qquad  X\in \so(g^0)
\tag{$\ref{important3}''$}
\end{equation*} 

As the reader may notice, we prefer to work with operators rather than with forms.  We used the same idea before when we replaced $\Lambda^2(V)$ by $\so(g)$. The reason is easy to explain:  operators form an associative algebra, i.e., one can multiply them and we use this property throughout the paper.

\medskip

The last formula \eqref{important3''}, in fact,  shows  how  to reconstruct $B$ from $R(X)$:  we need to ``replace'' $X$ by $\otimes$, i.e., $B=-\frac{1}{2}R(\otimes)$.  
Namely, we consider the following formal expression:
\begin{equation}
 \label{r2}
B= -\frac{1}{2}\cdot \frac{d}{dt} \big| _{t=0} p_{\mathrm{min}}(L+t\cdot\otimes),
 \end{equation}
where $p_{\mathrm{min}}(t)$ is the minimal polynomial of $L$.  This formula looks a bit strange but,  in fact, it defines a tensor $B$ of type $(2,2)$ whose meaning is very natural. If
 $p_{min}(t) = \sum_{m=0}^n a_m t^m$ is the minimal polynomial of $L$, then 
\begin{equation}
B = - \frac{1}{2}\cdot \sum_{m=0}^n a_m \sum_{j=0}^{m-1} L^{m-1-j} \otimes  L^{j}.
\label{r3}
\end{equation}

This formula is obtained from the right hand side of \eqref{r}, i.e.,
$$
 \frac{d}{dt} \big| _{t=0}\left( \sum_{m=0}^n a_m (L+t\cdot X)^m \right)= 
\sum_{m=0}^n a_m \sum_{j=0}^{m-1} L^{m-1-j} X L^{j},
$$
by substituting $\otimes$ instead $X$.

\begin{proposition} Assume that $L$ is a $g^0$-symmetric operator and consider it as $(1,1)$-tensor field whose components are all constant  in coordinates $x$.
Define the quadratic metric $g(x)=g^0 + \mathcal B(x,x)$ with $\mathcal B_{ij,pq}=g^0_{i\alpha} g^0_{p\beta} B^{\alpha, \beta}_{j,q}$, where  $B$ is constructed from $L$ by \eqref{r2} {\rm (}or, equivalently, by \eqref{r3}{\rm )}. Then 
\medskip

$1)$ $L$ is $g$-symmetric;
\medskip

$2)$  $\nabla L=0$, where $\nabla$ is the Levi-Civita connection for $g$;
\medskip

$ 3)$ The curvature tensor for $g$ at the origin is defined by \eqref{r}, i.e.,
$$
R(X) =  \frac{d}{dt} \big| _{t=0} p_{\mathrm{min}}(L+tX).
$$

\end{proposition}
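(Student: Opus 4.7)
The plan is to verify the three claims using the ``operator'' reformulation given by conditions $(\ref{important1}'')$, $(\ref{important2}'')$, $(\ref{important3}'')$. The structural observation that drives everything is that formula \eqref{r3} exhibits $B$ as a sum $B=\sum_\alpha \mathcal{C}_\alpha \otimes \mathcal{D}_\alpha$ in which each $\mathcal{C}_\alpha$ and $\mathcal{D}_\alpha$ is, up to a scalar, a power of $L$; in particular each factor is $g_0$-symmetric and commutes with $L$. Moreover, the associated operator on $\gl(V)$ obtained by ``replacing $\otimes$ by $X$'' is, by construction, exactly $B(X)=-\tfrac{1}{2}R(X)$, where $R$ is the formal curvature tensor from \eqref{r}.

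First I would dispatch condition $(\ref{important2}'')$: since every $\mathcal{C}_\alpha$ is a polynomial in $L$, it commutes with $L$, and the $g$-symmetry of $L$ follows immediately, giving 1). Next, for 2) I would verify $(\ref{important1}'')$, i.e.\ $[B(X),L]+[B(X),L]^{*}=0$ for every $X\in\gl(V)$. Differentiating the trivial identity $[p_{\mathrm{min}}(L+tX),L+tX]=0$ at $t=0$ yields
$$
[R(X),L]+[p_{\mathrm{min}}(L),X]=0,
$$
and since $p_{\mathrm{min}}(L)=0$ we conclude $[B(X),L]=0$ for \emph{every} $X\in\gl(V)$. (This is the calculation from Lemma~\ref{lem1}, but without the restriction $X\in\so(g)$; the argument for commutation with $L$ never used $g$-skew-symmetry of $X$.) Hence both terms of $(\ref{important1}'')$ vanish identically and $\nabla L=0$ follows.

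Finally, for 3) I would verify $(\ref{important3}'')$. For $X\in\so(g_0)$, Lemma~\ref{lem1} already shows $R(X)\in\so(g_0)$, so $B(X)^{*}=-B(X)$, and consequently
$$
R_{\mathrm{curv}}(X)=-B(X)+B(X)^{*}=-2B(X)=R(X),
$$
which is precisely formula \eqref{r}.

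The only place where any care is required is the initial translation between the tensor conditions \eqref{important1}, \eqref{important2}, \eqref{important3} on $\mathcal{B}$ and their operator forms $(\ref{important1}'')$, $(\ref{important2}'')$, $(\ref{important3}'')$; but this translation is carried out in the example preceding the Proposition and amounts to routine index bookkeeping once $\mathcal{B}$ is written as a sum of tensor products of $g_0$-symmetric operators. In essence, the substance of the proof is that \eqref{r2} is obtained from \eqref{r} by formally substituting $X\leftrightarrow\otimes$, and this single substitution is compatible with all three required conditions simultaneously.
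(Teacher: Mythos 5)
Your proof is correct and follows essentially the same route as the paper's: both reduce the three claims to the operator conditions $(\ref{important1}'')$, $(\ref{important2}'')$, $(\ref{important3}'')$ from the preceding Example, and then settle each one by the computation in Lemma~\ref{lem1}. Your explicit remark that the commutation argument $[B(X),L]=0$ never uses skew-symmetry of $X$, and therefore holds for all $X\in\gl(V)$ as condition $(\ref{important1}'')$ actually requires, is a point the paper leaves implicit when it simply cites Lemma~\ref{lem1}; it is exactly the right detail to pin down.
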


{\it Proof.}  Since $B$ is of the form $\sum_\alpha  C_\alpha \otimes D_\alpha$, where $C_\alpha$ and $D_\alpha$ are some powers of $L$, we can use formulas \eqref{important1''}, \eqref{important2''}, \eqref{important3''}  (see Example above).

Item 1)  is equivalent to \eqref{important2''} and hence  is obvious.

Next, to check 2) it suffices,  according to \eqref{important1''},  to show that
$$
[B(X),L]=0,  \quad \mbox {where } B= -\frac{1}{2}\cdot \frac{d}{dt} \big| _{t=0} p_{\mathrm{min}}(L+t\cdot X)
$$
but this has been done in Lemma \ref{lem1}.

Finally, we
compute the curvature tensor $R$ at the origin by using \eqref{important3''}:
$$
R(X)= -B(X) + B(X)^* = - 2B(X) = \frac{d}{dt} \big| _{t=0} p_{\mathrm{min}}(L+tX),
$$
as stated. Here we again use Lemma \ref{lem1} which says, in particular,  that  our $B(X)$ belongs to 
$\so(g^0)$,  i.e.,  $B(X) = - B(X)^*$.
 \qed

\medskip

This proposition together with Proposition \ref{twoblocks}  solve the realisation problem in the most important ``two Jordan blocks'' case.   To get the realisation for the general case,  we proceed just in the same way as we did for the algebraic part.  Namely, we split $L$ into Jordan blocks and  for each pair of Jordan blocks $L_i$, $L_j$ and define a formal curvature tensor $\widehat R_{ij}$  (see Section \ref{Berger} for details).  Then by using formula \eqref{r2} we can realise this formal curvature tensor by an appropriate quadratic metric $g(x) = g^0 + \widehat B_{ij}(x,x)$   satisfying $\nabla L=0$.  We omit the details because this construction is  straightforward and just repeats its algebraic counterpart discussed  in Section \ref{Berger}.  Now, if we set
$$
g(x) = g^0 + \mathcal B(x,x), \quad \mbox{with }  B=\sum_{i<j} \widehat B_{ij},
$$
then by linearity this metric still satisfies  $\nabla L=0$ and its curvature tensor coincides with $R_{\mathrm{formal}}=\sum_{i<j} \widehat R_{ij}$ from  Proposition \ref{twoblocks}.  This completes the realisation part of the proof.


\section{Appendix:  The case of a pair of complex conjugate eigenvalues}

Let $L:V\to V$ be a $g$-symmetric operator with two complex conjugate eigenvalues $\lambda$ and $\bar\lambda$.  In this case an analog of Proposition \ref{Jordan} can be formulated in complex terms.

The point is that on the vector space $V$ there is a canonical complex structure $J$ that can be uniquely defined by the following condition:    the $i$ and $-i$ eigenspaces of $J$ in $V^{\mathbb C}$  coincide with $\lambda$ and $\bar\lambda$ generalised eigenspaces of $L$ respectively.

Note that the complex structure $J$ both commutes  with $L$ and is $g$-symmetric.  This immediately implies that if we consider $V$ as a complex vector space with respect to $J$, then $L: V\to V$ is a complex operator and $g$ can be considered as the imaginary part of the following complex bilinear form
$g^{\mathbb C}:  V\times V \to \mathbb C$:
$$
g^{\mathbb C} (u,v) = g(Ju,v) + i g(u,v).
$$ 
It is easy to see that $L$ is still $g$-symmetric with respect to $g^{\mathbb C}$.

Thus, instead of looking for a real canonical form for $L$ and $g$, it is much more convenient to use a complex canonical form for $L$ and $g^{\mathbb C}$.  As a complex operator, $L$  has a single eigenvalue $\lambda$ and therefore we are lead to the situation described in Proposition \ref{Jordan}.  Replacing $\mathbb R$ by $\mathbb C$ does not change the conclusion:
there exists a complex coordinate system such that  $L$ and $g^{\mathbb C}$ are given exactly by the same matrices as $L$ and $g$ are in Proposition \ref {Jordan}.

In this canonical complex coordinate system, the statement of Proposition \ref{description}  remains unchanged if we replace the real Lie algebra $\goth{so}(g)$ by the complex Lie algebra $\goth{so}\bigl(g^{\mathbb C}\bigr)$  (the entries of all matrices in \eqref{X}--\eqref{Mij} are now, of course, complex).  This two Lie algebras are different, but we have the obvious inclusion
$\goth{so}\bigl(g^{\mathbb C}\bigr) \subset \goth{so}(g)$.  It is also important that $\goth g_L$ turns out to be a complex Lie algebra,  i.e., $\goth g_L \subset \goth{so}\bigl(g^{\mathbb C}\bigr)$.

To show that $\goth g_L$ is still Berger in this case, we first  need to verify the conclusion  of Propositions \ref{twoblocks}, i.e., to check that the image of the operator \eqref{r}  coincides with 
$\goth g_L$.  

Proposition \ref{twoblocks} is purely algebraic, so it remains true for a complex operator $L$ and a complex bilinear form $\goth g^{\mathbb C}$, if we define $R: \goth{so}(g^{\mathbb C}) \to 
\goth g_L$ by  \eqref{r} with $p_{\mathrm{min}}(t)=(t-\lambda)^n$.

We now must take care of two issues.  First of all,  $R$ should be defined on a larger Lie algebra, namely on $\goth{so}(g)$.   Second,   instead of $(t-\lambda)^n$ we should consider the real minimal polynomial  $p_{\mathrm{min}}(t)=(t-\lambda)^n (t-\bar\lambda)^n$  (otherwise, $R$ won't be real!).

The first issue is not much trouble at all:  we can restrict $R$ on the subalgebra $\goth{so}(g^{\mathbb C})\subset \goth{so}(g)$ and if the image still coincides with $\goth g_L$,  then the same will be true for the original operator (we use the fact that the image of $R$ belongs to $\goth g_L$ automatically,  Lemma~\ref{lem1}).

To sort out the second problem, we simply compute $R$  for the minimal polynomial $p_{\mathrm{min}}(t)=(t-\lambda)^n (t-\bar\lambda)^n$ thinking of $L$ and $X\in \goth{so}(g^{\mathbb C})$ as complex operators and using the fact that $(L-\lambda)^n=0$: 
$$
\begin{aligned}
&R(X)= \frac{d}{dt}\big|_{t=0} \Bigl((L-\lambda+tX)^n\cdot (L-\bar\lambda+tX)^n\Bigr) =\\
 &\left( \frac{d}{dt}\big|_{t=0} (L-\lambda+tX)^k \right)\cdot (L-\bar\lambda)^n +
(L-\lambda)^n\cdot  \frac{d}{dt}\big|_{t=0} (L-\bar\lambda+tX)^n=\\
&\left( \frac{d}{dt}\big|_{t=0} (L-\lambda+tX)^n \right)\cdot (L-\bar\lambda)^n.\end{aligned}
$$

The operator in the first bracket is the same as in Proposition \ref{twoblocks}.  In particular, its image coincides with $\goth g_L$, as needed.  After this we  multiply the result by the non-degenerate matrix $(L-\bar\lambda)^k$. This operation cannot change the dimension of the image, and since we know that $\mathrm{Im}\, R$ is contained in $\goth g_L$ automatically (Lemma~\ref{lem1}), we conclude  that $\mathrm{Im}\, R = \goth g_L$.

The proof of Proposition \ref{manyblocks}  does not use any specific property of the ``small'' operators $\widehat R_{ij}$. We only need the image of $\widehat R_{ij}$  to coincide with the subalgebra $\goth m_{ij} \subset \goth g_L$.   But this is exactly the statement of Proposition \ref{twoblocks} which still holds true in the case of two complex blocks.

Thus,  if $L$ has two complex conjugate eigenvalues $\lambda$ and $\bar\lambda$,  the Lie algebra  $\goth g_L$ is still Berger.

\end{document}